\documentclass[11pt]{article}%
\usepackage{amsfonts}
\usepackage{amsmath}
\usepackage{amssymb}
\usepackage{euler}
\usepackage{graphicx}%
\providecommand{\U}[1]{\protect\rule{.1in}{.1in}}
\newtheorem{theorem}{Theorem}

\newtheorem{corollary}[theorem]{Corollary}

\newtheorem{proposition}[theorem]{Proposition}

\newenvironment{proof}[1][Proof]{\noindent\textbf{#1.} }{\ \rule{0.5em}{0.5em}}
\begin{document}

\title{\textsf{A Constructive Version of Ekeland's Variational Principle}}
\author{\textsf{Douglas S Bridges}}
\maketitle

\begin{abstract}%
\noindent
\textsf{Building on preliminary results about lower sections of real-valued
functions on a metric space, we provide a constructive counterpart of
Ekeland's theorem on the approximate optimisation of real-valued functions.}

\end{abstract}

%

\normalfont\sf

\section{Introduction}

The framework of this article\footnote{%
\normalfont\sf
MSC Classifications: 03F60, 26E40
\par
\ \ \ Keywords: constructive, Ekeland} is Bishop-style constructive analysis
(\textbf{BISH} \cite{Bish, BB,Handbook}), which, in practice, is mathematics
carried out with intuitionistic logic, an appropriate constructive set theory
(see \cite[Ch. 2]{Handbook} or \cite{Aczel,dsbMorse}) or type theory
\cite{ML}, and dependent choice. The remarkable fact is that everything proved
within that framework has a fully computational interpretation within
recursion theory (\cite{Aberth, Kushner},\cite[Ch. 3]{BR}), Brouwer's
intuitionistic mathematics (\cite{Dummett,TvD}, \cite[Ch. 5]{BR}), Weihrauch's
Type II Effectivity theory \cite{Bauer,Wei}, and, we believe, any good model
of computable mathematics. It is also the case that everything in Bishop's
constructive mathematics holds in classical (that is, traditional) mathematics.

We shall be primarily interested in a constructive version of the classical
\textsc{Ekeland Variational Principle (EVP)}:

\begin{quote}
Let $\left(  X,\rho\right)  $ be a complete metric space and $f:X\rightarrow
\overline{\mathbb{R}}$ a lower semicontinuous function that is bounded below.
Let $x_{0}\in X$, $f(x_{0})\in\mathbb{R}$, and $c>0$. Then there exists $v\in
X$ such that $f(v)+c\rho(x_{0},v)\leqslant f(x_{0})\ $and $f(v)<f(x)+c\rho
(x,v)\ $for each $x\neq x_{0}$.
\end{quote}

%

\noindent
Here, $\overline{\mathbb{R}}\mathbb{\ }$denotes the extended real line. The
standard classical proof of EVP, as found on pages 15--16 of \cite{Aubin}%
,\footnote{%
\normalfont\sf
or https://en.wikipedia.org/wiki/Ekeland\%27s\_variational\_principle} fails
constructively because, in contrast to the classical situation, being bounded
below is not a sufficient condition for the infimum of a subset $S$ of
$\mathbb{R}$ to exist in \textbf{BISH}; but if $S$ is compact, then
constructively it does have an infimum.\footnote{%
\normalfont\sf
It has been known since the time of Brouwer that although a uniformly
continuous function on a totally bounded metric space has an infimum, it
cannot be proved constructively that if the space is compact, then the infimum
is attained (is a minimum). It seems hard to produce good conditions that
ensure the attainment of such an infimum, at least without invoking some
additional principle such as a version of Brouwer's fan theorem; see
\cite{bbs,dsbopt,Schuster}.} In fact, our proof of Theorem \ref{Ekeland} below
will need $X$ to be locally compact.

But infima are not the only hurdle we need to jump over for a constructive
proof. In \textbf{BISH} we cannot be sure that if $X$ is locally compact and
$f$ is both bounded below and uniformly continuous, then sets of the form
$\left\{  x\in X:f(x)\leqslant\lambda\right\}  $ are totally bounded for all
$\lambda>\inf f$. But we can prove that for all but countably many real values
$\lambda>\inf f$, the set in question is totally bounded (Proposition
\ref{Bisha} below).

Our constructive version of EVP (Theorem \ref{Ekeland} below) replaces the
completeness of $X$ by local compactness and requires continuity of the
function $f:X\rightarrow\mathbb{R}$. The paper ends with a brief indication of
where the theorem fits into the intuitionistic and recursive varieties of
constructive mathematics. Altogether, the work below can be viewed as a
contribution to constructive optimisation theory, complementing and, we hope,
eventually extending the scope of that subject (see \cite[Chs. 11 and
12]{Handbook}, \cite{dsbopt,Schuster}).

We assume a minimal technical background in constructive mathematics, as in
the relevant parts of Chapters 1--4 of \cite{bv} or Ishihara's Chapter 8
in\footnote{%
\normalfont\sf
The Handbook \cite{Handbook}\ is the most up-to-date reference in constructive
mathematics.} \cite{Handbook}; other background references include
\cite{Bish,BB,BR,TvD}. However, before delving into our main topic, it is
convenient, and potentially helpful, to put on record here some constructive
analysis dealing with a metric space\footnote{%
\normalfont\sf
We use $\rho$ to denote the metric on any metric space.} $(X,\rho)$. First,
the \textsc{inequality} $\neq$ on $X$ is defined by%
\[
x\neq y\Leftrightarrow\rho(x,y)>0.
\]
and the \textsc{complement}\textbf{\ }of a subset $S$ of $X$ is the set%
\[
X%
\mathord{\sim}%
S\equiv\left\{  x\in X:\forall_{s\in S}\left(  x\neq s\right)  \right\}  .
\]
We define $S\subset X$ to be

\begin{itemize}
\item[$\blacktriangleright$] \textsc{inhabited} if we can construct an object
belonging to it;

\item[$\blacktriangleright$] \textsc{located} if $\rho(x,S)\equiv\inf\left\{
\rho(x,s):s\in S\right\}  \ $exists for each $x$ in $X$;

\item[$\blacktriangleright$] \textsc{totally bounded}\textbf{\ }if for each
$\varepsilon>0$ there exists an inhabited finite subset---called
an\textbf{\ }$\varepsilon$\textbf{-approximation }to $S$---of $S$ with the
property: for each $x\in S$ there exists $k\leqslant n$ with $\rho
(x,x_{k})<\varepsilon$;

\item[$\blacktriangleright$] \textsc{compact}\textbf{\ }if it is totally
bounded and complete;

\item[$\blacktriangleright$] \textsc{locally compact} if it is inhabited and
every bounded subset is contained in a compact subset of $X$.
\end{itemize}

%

\noindent
Note that we require sets that are totally bounded, compact, or inhabited to
be inhabited. If $S$ is locally compact, then it is complete and located;
moreover, by Proposition 2.2.18 of \cite{bv}, every located closed subspace of
$S$ is locally compact.

We say that a subset $S$ of $X$ is a \textsc{compact image}\textbf{\ }if there
is a uniformly continuous mapping of a compact metric space \emph{onto}
$S$.\footnote{%
\normalfont\sf
Classically, this would mean that $S$ itself is compact; but in the recursive
variety of constructive analysis there exists a uniformly continuous mapping
of the compact interval $\left[  0,1\right]  $ onto the half-open interval
$(0,1]$ (\cite{JR1984},\cite[pp. 134--136]{BR}), so we cannot expect to prove
that compact images are themselves compact.} A mapping $f$ of $X$ into a
metric space $Y$ is \textsc{continuous} if it is \textsc{uniformly continuous
near each compact image} $K$ of $X$: that is, for each $\varepsilon>0\ $there
exists $\delta>0$ such that if $x\in K,\,x^{\prime}\in X,$ and $\rho
(x,x^{\prime})<\delta$, then $\rho(f(x),f(x^{\prime}))<\varepsilon$. With this
notion (originating with Bishop \cite{Bishneat}) we can prove that sums,
products, etc., and---significantly, because mere uniform continuity on
compact subsets does not cover this in \textbf{BISH}---compositions of
continuous functions are continuous. Because singleton subsets of a metric
space are compact, continuity implies pointwise continuity. Hence if $f$ is
continuous, then it is \textsc{strongly extensional}: that is, $x\neq
x^{\prime}$ whenever $x,x^{\prime}\in X$ and $f(x)\neq f(x^{\prime})$. If $X$
is locally compact, then the following conditions are equivalent:

\begin{itemize}
\item[$\bullet$] $f$ is continuous (uniformly near each compact subset of
$X).$

\item[$\bullet$] $f$ is uniformly continuous on each compact subset of $X;$

\item[$\bullet$] $f$ is uniformly continuous on each bounded subset of $X$.
\end{itemize}

%

\noindent
In that case, $f$ maps bounded sets onto bounded sets.

\section{Sections and the Constructive EVP}

For a mapping $f$ of $X$ into $\mathbb{R}$ and for each $\lambda\in\mathbb{R}$
we define the corresponding \textsc{lower section}%
\[
\mathsf{L}\left(  f,\lambda\right)  =\left\{  x\in X:f(x)\leqslant
\lambda\right\}  .
\]
and \textsc{epigraph}%
\[
\mathsf{epi}(f)=\left\{  \left(  x,\lambda\right)  :x\in X,\,\lambda
\in\mathbb{R}\mathbf{,\,}f(x)\leqslant\lambda\right\}  .
\]
%

\noindent
Our first result needs only sequential continuity.

\begin{proposition}
\label{may12p1}Let $X\,$\ be a metric space and $f$ a sequentially continuous
mapping of $X$ into $\mathbb{R}$. Then

\begin{enumerate}
\item[\emph{(i)}] $\mathsf{epi}(f)$ is closed in $X$, and

\item[\emph{(ii)}] every lower section of $f$ is closed in $X$.
\end{enumerate}
\end{proposition}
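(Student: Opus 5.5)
Note first that in (i) "closed in $X$" must mean closed in $X\times\mathbb{R}$, with (say) the product metric $\rho((x,\lambda),(y,\mu))=\max\{\rho(x,y),|\lambda-\mu|\}$. Constructively, the appropriate notion of closedness is sequential: a set $S$ is closed if every point that is the limit of a sequence in $S$ belongs to $S$. This notion matches sequential continuity exactly and needs no decision about whether a point lies in $S$. Under it, (i) is a direct limit argument, and (ii) can either be done the same way or read off from (i) via the slice at height $\lambda$.

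For (i), take a sequence $(x_n,\lambda_n)$ in $\mathsf{epi}(f)$ converging to $(x,\lambda)\in X\times\mathbb{R}$. Then $x_n\to x$ in $X$ and $\lambda_n\to\lambda$ in $\mathbb{R}$. Sequential continuity gives $f(x_n)\to f(x)$, and $f(x_n)\leqslant\lambda_n$ for every $n$. We must show $f(x)\leqslant\lambda$.

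The only point requiring constructive care is the passage of a non-strict inequality to the limit. We use the fact that $a\leqslant b$ in $\mathbb{R}$ means $\neg(b<a)$. Suppose $f(x)>\lambda$, and let $\varepsilon=(f(x)-\lambda)/3>0$. Choose $n$ with $|f(x_n)-f(x)|<\varepsilon$ and $|\lambda_n-\lambda|<\varepsilon$. Then
\[
f(x_n)>f(x)-\varepsilon>\lambda+\varepsilon>\lambda_n ,
\]
which contradicts $f(x_n)\leqslant\lambda_n$. Hence $f(x)\leqslant\lambda$ and $(x,\lambda)\in\mathsf{epi}(f)$. Equivalently, one can note that $\{(a,b)\in\mathbb{R}^2:a\leqslant b\}$ is closed and that limits in $\mathbb{R}$ preserve $\leqslant$; this is routine and involves no omniscience.

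For (ii), fix $\lambda$ and let $x_n\in\mathsf{L}(f,\lambda)$ with $x_n\to x$. Then $(x_n,\lambda)\in\mathsf{epi}(f)$ and $(x_n,\lambda)\to(x,\lambda)$, so by (i) we get $(x,\lambda)\in\mathsf{epi}(f)$, that is, $x\in\mathsf{L}(f,\lambda)$. No step is a real obstacle. The only delicate points are fixing the meaning of "closed" (sequential closure, since only sequential continuity is assumed) and handling $\leqslant$ through the negation of $>$, both of which are dealt with above.
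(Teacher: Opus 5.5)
Your proof is correct and is essentially the paper's argument: assume $f(x)>\lambda$, then use sequential continuity together with $\lambda_n\to\lambda$ to find an $n$ with $f(x_n)>\lambda_n$, which is a contradiction (the paper splits at the midpoint $\alpha=\frac{1}{2}(\lambda+f(x))$, whereas you use the margin $\frac{1}{3}(f(x)-\lambda)$). The only small differences are that you obtain (ii) by taking the slice of (i) at height $\lambda$ rather than repeating the argument, and that you rightly read ``closed in $X$'' in (i) as closed in $X\times\mathbb{R}$.
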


\begin{proof}
Let $\left(  \left(  x_{n},\lambda_{n}\right)  \right)  _{n\geqslant1}$ be a
sequence in $\mathsf{epi}(f)$ converging to a limit $\left(  x,\lambda\right)
$ in $X$. Suppose that $f(x)>\lambda$ and let $\alpha=\frac{1}{2}%
(\lambda+f(x))$. Then $f(x)>\alpha>\lambda$. By the sequential continuity of
$f$ at $x$, for all sufficiently large $n$ we have $f(x_{n})>\alpha$. But
$\lambda=\lim_{n\rightarrow\infty}\lambda_{n}$, so for all sufficiently large
$n$ we have $\lambda_{n}<\alpha$ and therefore $f(x_{n})>\lambda_{n}$. This is
absurd, since $\left(  x_{n},\lambda_{n}\right)  \in\mathsf{epi}(f)$. Hence
$f(x)\leqslant\lambda$ and therefore $\left(  x,\lambda\right)  \in
\mathsf{epi}(f)$. An analogous argument proves (ii).
\end{proof}

%

\medskip

A property $P(\lambda)$ \textsc{holds for all but countably many} $\lambda$ in
$\mathbb{R}$ if there exists a sequence $\left(  \lambda_{m}\right)
_{m\geqslant1}$ of real numbers such that $P(\lambda)$ holds for all
$\lambda\in\mathbb{R}$ with $\lambda\neq\lambda_{m}$ for each $m$. We now
state without proof a fundamental result about compact metric spaces.

\begin{proposition}
\label{Bisha}Let $X$ be a compact metric space, and $f:X\rightarrow\mathbb{R}$
a continuous function. Then $\mathsf{L}\left(  f,\lambda\right)  $ is compact
for all but countably many real numbers $\lambda>\inf f$ \emph{\cite[Ch. 4,
(4.9)]{BB}}.
\end{proposition}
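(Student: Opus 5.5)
Since $\mathsf{L}(f,\lambda)$ is closed in $X$ by Proposition \ref{may12p1}(ii), and a closed subset of a complete space is complete, it suffices to show that $\mathsf{L}(f,\lambda)$ is totally bounded for all but countably many $\lambda>\inf f$; inhabitedness for $\lambda>\inf f$ follows by choosing $x$ with $f(x)<\lambda$. Recall that $\inf f$ exists because $f$ is uniformly continuous on the compact space $X$, so $f(X)$ is totally bounded. The plan is to run Bishop's standard argument, which first produces compact sets of the form $\{x:\rho(x,K)\leqslant t\}$ for all but countably many $t$, but applied to the real-valued function $f$.

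Fix a dense sequence $(a_n)$ in $X$, which exists because $X$ is totally bounded, and for each $k$ a finite $2^{-k}$-approximation $F_k\subset\{a_n\}$. The key lemma is as follows. \emph{Let $\lambda>\inf f$ and suppose that for each $\varepsilon>0$ the set $\{x:f(x)<\lambda\}$ is approximated within $\varepsilon$ by finitely many points, with $|f(x)-\lambda|$ bounded away from $0$ at each of the candidate points we test. Then $\mathsf{L}(f,\lambda)$ is totally bounded.} More concretely, I will show that $\mathsf{L}(f,\lambda)$ is totally bounded whenever $\lambda\neq f(a_n)$ for every $n$, or at least whenever $\lambda$ avoids a countable set derived from the values $f(a_n)$.

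Given $\varepsilon>0$, choose $\delta$ from uniform continuity so that $\rho(x,y)<\delta$ implies $|f(x)-f(y)|<\varepsilon'$. Take the finite $\delta$-approximation $F_k$. For each $a\in F_k$ we can decide, approximately, either $f(a)<\lambda+\eta$ or $f(a)>\lambda$. The difficulty is that points with $f(a)$ slightly above $\lambda$ may be close to points of $\mathsf{L}(f,\lambda)$, while points with $f(a)\leqslant\lambda$ are certainly in it. The cleanest route is via locatedness. $\mathsf{L}(f,\lambda)$ is totally bounded if and only if it is located, given completeness of the ambient compact space. So it suffices to show that the distance $d_\lambda(x)=\rho(x,\mathsf{L}(f,\lambda))$ exists for each $x$. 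Define
\[
g(x,\lambda)=\inf\{\rho(x,y)+\max(0,f(y)-\lambda)\cdot M:y\in X\},
\]
or more simply, for fixed $x$, consider the function $\phi_x(\lambda)=\inf\{\rho(x,y):y\in X,\ f(y)<\lambda\}$. This is a monotone nonincreasing function of $\lambda$, computable as an upper/lower approximation via the dense sequence. A monotone function has a well-defined value (that is, its upper and lower limits agree) at all but countably many $\lambda$; at such continuity points $\rho(x,\mathsf{L}(f,\lambda))$ exists and equals $\phi_x(\lambda)$.

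To make the exceptional set independent of $x$, apply this only to $x$ in the countable dense set $\{a_n\}$. For each $n$, the monotone function $\lambda\mapsto\rho(a_n,\{f<\lambda\})$ yields, by a constructive diagonal or binary-splitting argument, a sequence of exceptional points outside which it is continuous. Merging these countably many sequences gives a single sequence $(\lambda_m)$. For $\lambda$ distinct from every $\lambda_m$, the distances $\rho(a_n,\mathsf{L}(f,\lambda))$ exist for all $n$, and hence, by density together with the $1$-Lipschitz property, $\rho(x,\mathsf{L}(f,\lambda))$ exists for all $x$. So $\mathsf{L}(f,\lambda)$ is located and therefore compact.

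The main obstacle is the constructive claim that a monotone function is continuous off an explicitly enumerated countable set. One must construct the sequence $(\lambda_m)$ rather than merely show that discontinuities are countable. I would try the Bishop–Bridges technique: for rational intervals and for $\varepsilon=2^{-j}$, the number of disjoint jumps of size greater than $\varepsilon$ is bounded by $\mathrm{diam}(X)/\varepsilon$. Using approximate decisions, one locates finitely many candidate points in each round, and these form the sequence to be avoided.
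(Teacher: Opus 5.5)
Your preliminary reductions are sound. $\mathsf{L}(f,\lambda)$ is closed by Proposition \ref{may12p1}. An inhabited subset of a totally bounded space is totally bounded iff it is located. Locatedness at the points of a dense sequence extends to all of $X$.

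The gap is in the core step. The function $\phi_x(\lambda)=\inf\{\rho(x,y):f(y)<\lambda\}$ is not available constructively. It is the distance from $x$ to $\{f<\lambda\}$, and its existence is a locatedness problem of exactly the kind you are trying to solve. Your ``approximation via the dense sequence'', $\inf\{\rho(x,a_n):f(a_n)<\lambda\}$, is an infimum over an undecidable set of indices, so it need not exist either. Hence there is no real-valued monotone function to which a continuity-point argument can be applied.

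Moreover, the lemma that carries all the weight is left as ``the main obstacle'': you must produce an explicit sequence $(\lambda_m)$ off which the relevant function is continuous. The classical count of $\varepsilon$-jumps does not yield such a sequence, because you cannot decide where the jumps are. Your intermediate claim that $\lambda\neq f(a_n)$ for all $n$ suffices is also unsupported. To rescue this route you would need three things:
\begin{itemize}
\item replace $\phi_x$ by approximants that genuinely exist, e.g.\ $\inf_{y\in X}\bigl(\rho(x,y)+k\max(0,f(y)-\lambda)\bigr)$;
\item prove constructively that their limit in $k$ exists and is left-continuous in $\lambda$ off an enumerated countable set;
\item show that at such $\lambda$ the limit equals $\rho(x,\mathsf{L}(f,\lambda))$.
\end{itemize}
That is substantial work the proposal does not contain.

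The paper gives no proof of its own; it cites Bishop--Bridges. The missing idea is that the exceptional values should be real numbers you can actually construct, namely infima of uniformly continuous functions on the compact space $X$. For each $n$ take a $1/n$-approximation $\{x_{n,1},\dots,x_{n,k_n}\}$ to $X$ and set
\[
\tau_{n,i}=\inf_{x\in X}\Bigl(f(x)+n^{2}\max\bigl(0,\rho(x,x_{n,i})-1/n\bigr)\Bigr).
\]
Now let $\lambda>\inf f$ with $\lambda\neq\tau_{n,i}$ for all $n,i$. Then for each $i$ you can decide which of two cases holds:
\begin{itemize}
\item $\tau_{n,i}<\lambda$: pick $y_i$ with $f(y_i)<\lambda$ and $\rho(y_i,x_{n,i})<1/n+(\lambda-\inf f)/n^{2}$;
\item $\tau_{n,i}>\lambda$: no point of $\mathsf{L}(f,\lambda)$ lies within $1/n$ of $x_{n,i}$.
\end{itemize}
At least one $y_i$ exists, because $\lambda>\inf f$. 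Every point of $\mathsf{L}(f,\lambda)$ is within $1/n$ of some $x_{n,i}$, which must fall in the first case. So for $n\geqslant\lambda-\inf f$ the points $y_i$ form a $3/n$-approximation to $\mathsf{L}(f,\lambda)$. Hence $\mathsf{L}(f,\lambda)$ is totally bounded, and by your closedness argument it is compact. No monotone functions or continuity points are needed.
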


%

\medskip

We denote the open (respectively, closed) ball with centre $a$ and radius
$r>0$ in a metric space $X$ by $B_{X}(a,r)$ (respectively, $\overline{B}%
_{X}(a,r)$).

\begin{corollary}
\label{may26c1}If $X$ is a locally compact metric space and $a\in X$, then
$\overline{B}_{X}(a,r)$ is compact for all but countably many $r>0$.
\end{corollary}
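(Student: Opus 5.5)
The plan is to apply Proposition \ref{Bisha} to the function $f(x)=\rho(a,x)$ on a suitable compact set, using local compactness to get that compact set. For each positive integer $N$, the ball $B_{X}(a,N)$ is bounded, so by local compactness it is contained in a compact subset $K_{N}$ of $X$. The function $x\mapsto\rho(a,x)$ is uniformly continuous (indeed $1$-Lipschitz) on $K_{N}$, and its infimum on $K_{N}$ is $0$, since $a\in K_{N}$. By Proposition \ref{Bisha}, there is a sequence $(\lambda_{N,m})_{m\geqslant1}$ such that
\[
\left\{ x\in K_{N}:\rho(a,x)\leqslant r\right\}
\]
is compact for every $r>0$ with $r\neq\lambda_{N,m}$ for all $m$.

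Next we relate these sets to the closed balls of $X$. For $0<r<N$ we have $\overline{B}_{X}(a,r)\subset B_{X}(a,N)\subset K_{N}$, so
\[
\left\{ x\in K_{N}:\rho(a,x)\leqslant r\right\} =\overline{B}_{X}(a,r).
\]
Hence $\overline{B}_{X}(a,r)$ is compact whenever $0<r<N$ and $r\neq\lambda_{N,m}$ for each $m$.

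Finally, enumerate the double sequence $(\lambda_{N,m})_{N,m\geqslant1}$ as a single sequence $(\mu_{k})_{k\geqslant1}$. Take $r>0$ with $r\neq\mu_{k}$ for all $k$. We can construct a positive integer $N>r$, since $r$ is a real number and so has a rational upper bound. Then $r\neq\lambda_{N,m}$ for all $m$ and $0<r<N$, so $\overline{B}_{X}(a,r)$ is compact. This gives the required countable exceptional set.

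The only delicate point is constructive: we must not assume that the ball itself is totally bounded, which is exactly why we pass through the compact superset $K_{N}$. The identification of the lower section of $K_{N}$ with the closed ball needs the strict inequality $r<N$, and this is where choosing $N$ depending on $r$, together with the countable union of exceptional sequences, matters. Proposition \ref{Bisha} requires $\lambda>\inf f$; here $\inf f=0<r$, so that condition is satisfied.
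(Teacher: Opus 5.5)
Your proof is correct and is essentially the intended argument. The paper states this corollary without proof, as an immediate consequence of Proposition~\ref{Bisha}. Your derivation supplies exactly the expected details: apply that proposition to the $1$-Lipschitz map $x\mapsto\rho(a,x)$ on a compact $K_N\supset B_X(a,N)$, identify the lower section with $\overline{B}_X(a,r)$ when $r<N$, and merge the exceptional sequences into one. The paper uses the same countable-union device in the proof of Proposition~\ref{may23p1}.
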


\begin{proposition}
\label{may23p1}Let $f$ be a continuous mapping of a locally compact metric
space $X$ into $\mathbb{R}$, such that $\inf f$ exists in $\mathbb{R}$, and
let $a\in X$. Then $\mathsf{L}(f,\lambda)$ is locally compact for all but
countably many $\lambda>\inf f$.
\end{proposition}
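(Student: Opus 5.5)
We need to show that $\mathsf{L}(f,\lambda)$ is locally compact, that is, inhabited, with every bounded subset contained in a compact subset. The plan is to exhaust $X$ by compact balls centred at $a$ and apply Proposition \ref{Bisha} on each ball, then collect the countably many exceptional values.

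\textbf{Choosing the balls.} By Corollary \ref{may26c1}, pick a strictly increasing sequence $r_n\to\infty$ such that each $K_n=\overline{B}_X(a,r_n)$ is compact. This is possible because only countably many radii are exceptional, so in each interval $(n,n+1)$ we can construct a radius that avoids them.

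\textbf{Applying Proposition \ref{Bisha} on each ball.} The restriction $f|_{K_n}$ is continuous on the compact space $K_n$, so $\inf_{K_n} f$ exists. By Proposition \ref{Bisha} there is a sequence $(\lambda_{n,m})_{m\geqslant1}$ such that $\mathsf{L}(f|_{K_n},\lambda)=\mathsf{L}(f,\lambda)\cap K_n$ is compact whenever $\lambda>\inf_{K_n} f$ and $\lambda\neq\lambda_{n,m}$ for every $m$. The exceptional set we take for the proposition is the countable family of all $\lambda_{n,m}$.

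\textbf{Handling a given $\lambda$.} Fix $\lambda>\inf f$ with $\lambda\neq\lambda_{n,m}$ for all $n,m$.

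1. Since $\lambda>\inf f$, there is $x\in X$ with $f(x)<\lambda$. This shows $\mathsf{L}(f,\lambda)$ is inhabited.
2. Choose $N$ with $\rho(a,x)<r_N$. Then $\inf_{K_n}f\leqslant f(x)<\lambda$ for all $n\geqslant N$.
3. Hence $\mathsf{L}(f,\lambda)\cap K_n$ is compact (in particular inhabited) for all $n\geqslant N$.
4. Let $S\subset\mathsf{L}(f,\lambda)$ be bounded. Then $S\subset K_n$ for some $n\geqslant N$, so $S$ lies in the compact set $\mathsf{L}(f,\lambda)\cap K_n$, which is a subset of $\mathsf{L}(f,\lambda)$.

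This gives local compactness of $\mathsf{L}(f,\lambda)$.

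\textbf{Expected obstacles.} The main delicate point is the first step, namely that the condition $\lambda>\inf_{K_n} f$ in Proposition \ref{Bisha} must hold. This is handled by taking $n$ large, as in step 2, rather than requiring $\lambda>\inf_{K_n}f$ for all $n$.

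A second point is that the exceptional countable set must not depend on $\lambda$. This holds because the radii $r_n$ are chosen independently of $\lambda$. Note also that $\inf f\leqslant\inf_{K_n}f$, so for small $n$ the hypothesis of Proposition \ref{Bisha} may fail, but those $n$ are simply not used.

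Finally, closedness of $\mathsf{L}(f,\lambda)$, from Proposition \ref{may12p1}, is implicit in Proposition \ref{Bisha}, so it needs no separate argument here.
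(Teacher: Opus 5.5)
Your proof is correct and follows the paper's route: exhaust $X$ by compact balls $\overline{B}_X(a,r_n)$ chosen via Corollary~\ref{may26c1}, apply Proposition~\ref{Bisha} on each ball, and pool the countably many exceptional values. The one difference is how you secure $\lambda>\inf f$ over the relevant ball. The paper adds those infima to the exceptional set and obtains the strict inequality from a point of the given bounded set. You instead use a witness $x$ with $f(x)<\lambda$ and take $n$ large, which does not need the bounded set to be inhabited and also shows explicitly that $\mathsf{L}(f,\lambda)$ is inhabited.
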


\begin{proof}
Using Corollary \ref{may26c1}, choose a strictly increasing sequence $\left(
r_{n}\right)  _{n\geqslant1}$ of positive numbers diverging to $\infty$ such
that $B_{n}\equiv\overline{B}_{X}\left(  x_{0},r_{n}\right)  $ is compact. By
Proposition \ref{Bisha}, for each $n$ there exists a sequence $\left(
\lambda_{n,k}\right)  _{k\geqslant1}$ in $\mathbb{R}$ with each $\lambda
_{n,k}>\inf f(B_{n})\geqslant\inf f$, such that if $\lambda>\inf f$ and
$\lambda\neq\lambda_{n,k}$ for each $k$, then
\[
B_{n}\cap\mathsf{L}(f,\lambda)=\left\{  x\in B_{n}:f(x)\leqslant
\lambda\right\}
\]
is compact. Let $\left(  \lambda_{m}\right)  _{m\geqslant1}$ be an enumeration
of the real numbers $\lambda_{n,k}$ and $\inf f(B_{n})\ \left(  n,k\geqslant
1\right)  $. Let $\lambda>\inf f$ and $\lambda\neq\lambda_{m}$ for each $m$.
Given any bounded subset $B$ of $\mathsf{L}(f,\lambda)$, choose $N$ such that
$B\subset B_{N}\cap\mathsf{L}(f,\lambda)$. If $x\in B$, then $x\in B_{N}$ and
$\inf f(B_{N})\leqslant(x)\leqslant\lambda$, so $\lambda>\inf f(B_{N})$, and
$\lambda\neq\lambda_{N,k}$ for each $k\geqslant1$. Hence $B_{N}\cap
\mathsf{L}(f,\lambda)$ is a compact subset of $\mathsf{L}(f,\lambda)$ that
includes $B$.%

\hfill

\end{proof}

\begin{corollary}
\label{may25c3}Let $f$ be a continuous mapping of a locally compact metric
space $X$ into $\mathbb{R}$ such that $\inf f$ exists. Let $a\in X$ and $c>0$.
Then the infimum of the mapping $g:x\rightsquigarrow f(x)+c\rho(x,a)$ on $X$
exists, and $\mathsf{L}(g,\lambda)$ is compact for all but countably many
$\lambda>\inf g$.
\end{corollary}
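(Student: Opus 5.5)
We first show that $\inf g$ exists. Write $m=\inf f$, so $g(x)\geqslant m+c\rho(x,a)$ for all $x$, and $g(a)=f(a)$. Choose $R>0$ with $R>(f(a)-m)/c+1$ such that $K\equiv\overline{B}_{X}(a,R)$ is compact; Corollary \ref{may26c1} gives such an $R$, since all but countably many radii work. Because $g$ is continuous (a sum of continuous functions) and $K$ is compact, $\mu\equiv\inf g(K)$ exists, and $\mu\leqslant g(a)=f(a)$.

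We claim $\mu=\inf g$. It suffices to show that $g(x)\geqslant\mu$ for every $x\in X$; the approximating points then come from $K$. Constructively, fix $x\in X$ and note that either $\rho(x,a)<R$ or $\rho(x,a)>R-1$. In the first case $x\in K$, so $g(x)\geqslant\mu$. In the second case
\[
g(x)\geqslant m+c(R-1)>f(a)\geqslant\mu .
\]
Hence $\inf g=\mu$ exists.

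Next we prove that $\mathsf{L}(g,\lambda)$ is compact for all but countably many $\lambda>\inf g$. Using Corollary \ref{may26c1}, choose a strictly increasing sequence $R_{n}\to\infty$, with $R_{1}=R$, such that each $K_{n}\equiv\overline{B}_{X}(a,R_{n})$ is compact. For each $n$, $g|_{K_{n}}$ is continuous on a compact space with $\inf g(K_{n})=\inf g$ (since $K\subset K_{n}$). Proposition \ref{Bisha} then yields a sequence $(\lambda_{n,k})_{k}$ such that $K_{n}\cap\mathsf{L}(g,\lambda)$ is compact whenever $\lambda>\inf g$ and $\lambda\neq\lambda_{n,k}$ for all $k$. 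Enumerate all the $\lambda_{n,k}$ as $(\lambda_{m})$.

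Now let $\lambda>\inf g$ with $\lambda\neq\lambda_{m}$ for all $m$. Choose $n$ such that $R_{n}>(\lambda-m)/c$. If $g(x)\leqslant\lambda$, then $c\rho(x,a)\leqslant\lambda-m<cR_{n}$, so $x\in K_{n}$. Hence
\[
\mathsf{L}(g,\lambda)=K_{n}\cap\mathsf{L}(g,\lambda),
\]
which is compact.

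The main obstacle is the first step: we cannot decide membership in $K$ outright, so the argument must rely on the approximate dichotomy $\rho(x,a)<R$ or $\rho(x,a)>R-1$. This is why $R$ has to carry the margin of $1$ built into its choice. The remaining steps only assemble Proposition \ref{Bisha} and Corollary \ref{may26c1}, in the same way as in the proof of Proposition \ref{may23p1}.
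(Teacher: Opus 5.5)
Your proof is correct, and it differs from the paper's in both halves. For the existence of $\inf g$, the paper first normalises to $\inf f\geqslant0$. It then takes arbitrary $\alpha<\beta$, compares them with the infimum $\mu$ of $g$ over a compact ball of radius $r>\alpha$, and invokes the constructive least-upper-bound principle. You instead fix one compact ball $K=\overline{B}_X(a,R)$ whose radius exceeds $(f(a)-m)/c$ by a margin of $1$. Using the approximate dichotomy ``$\rho(x,a)<R$ or $\rho(x,a)>R-1$'' pointwise, you show that $\inf g(K)$ is already a lower bound for $g$ on all of $X$. This avoids the least-upper-bound principle altogether and identifies $\inf g$ explicitly as $\inf g(K)$, which is more informative. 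For the lower sections, the paper cites Proposition \ref{may23p1} to get local compactness of $\mathsf{L}(g,\lambda)$ for all but countably many $\lambda$. It then notes that $\mathsf{L}(g,\lambda)\subset\overline{B}_X(a,c^{-1}\lambda)$ is bounded, hence compact. You instead re-run the ball-exhaustion argument behind Proposition \ref{may23p1} directly for $g$, choosing $K_n$ large enough to contain the whole of $\mathsf{L}(g,\lambda)$. Your route repeats that argument, but it is self-contained. Moreover, because $K\subset K_n$ forces $\inf g(K_n)=\inf g<\lambda$, you need not add the values $\inf g(K_n)$ to the exceptional sequence; the proof of Proposition \ref{may23p1} has to do exactly that with the numbers $\inf f(B_n)$. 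One cosmetic point: you use $m$ both for $\inf f$ and as the index of the enumeration $(\lambda_m)$, so rename one of them.
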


\begin{proof}
If necessary replacing $f$ by $f+\inf f$, we may assume that $\inf
f\geqslant0$. Let $\alpha<\beta$, choose $r>\alpha$ such that $\overline
{B}_{X}(a,r)$ is compact, and set%
\[
\mu=\inf\left\{  g(x):x\in\overline{B}_{X}(a,r)\right\}  .
\]
Either $\mu>\alpha$ or $\mu<\beta$. In the first case, for all $x\in X$ either
$\rho(x,a)<r$ and therefore $g(x)\geqslant\mu>\alpha$, or else $\rho
(x,a)>\alpha$ and therefore $g(x)=f(x)+\rho(x,\alpha)>\alpha$. Thus if
$\mu>\alpha$, then $g(x)>\alpha$ for all $x\in X$. On the other hand, if
$\mu<\beta$, then there exists $x\in\overline{B}_{X}(a,r)$ with $g(x)<\beta$.
The constructive least-upper-bound principle \cite[Thm. 2.1.18]{bv} now shows
that $\inf g$ exists; it is clearly positive. Since $g$ is continuous, we see
from Proposition \ref{may23p1} that $\mathsf{L}(g,\lambda)$ is locally compact
for all but countably many $\lambda>\inf g$. But for such $\lambda$ and all
$x\in\mathsf{L}(g,\lambda)$ we have $c\rho(x,a)\leqslant g(x)\leqslant\lambda
$; so $\mathsf{L}(g,\lambda)\subset\overline{B}_{X}(a,c^{-1}\lambda)$,
$\mathsf{L}(g,\lambda)$ is bounded, and therefore it is compact.
\end{proof}

%

\medskip

If $X$ is a compact metric space, then its \textsc{diameter},%
\[
\mathsf{diam}(X)\equiv\sup\left\{  \rho(x,y):x,y\in X\right\}
\]
exists as the supremum of the uniformly continuous function $\rho$ on the
compact product space $X\times X$.

\begin{proposition}
\label{may04p1}Let $X$ be a metric space, and $\left(  K_{n}\right)
_{n\geqslant1}$ a sequence of compact subsets of $X$ that

\begin{itemize}
\item[\emph{(a)}] is decreasing---that is, $K_{n+1}\subset K_{n}$ for each
$n$--- and\footnote{%
\normalfont\sf
Dropping hypothesis (b) from Proposition \ref{may04p1}, and using the
classical---but not constructive---sequential compactness of the sets $K_{n}$,
we can easily prove that, simply under the hypothesis (a), $%
{\textstyle\bigcap_{n\geq1}}
K_{n}$ is inhabited. Here, however, is a Brouwerian example showing that we
cannot prove this constructively. Let $\left(  a_{n}\right)  _{n\geq1}$ be a
binary sequence with at most one term equal to $1$ (more formally, $a_{m}%
a_{n}=0$ whenever $m\neq n$). If $a_{k}=0$ for all $k\leq n$, set
$K_{n}=\left\{  0,1\right\}  $; if $a_{n}=1$, then set $K_{n}=\left\{
0\right\}  $ if $n$ is even, and $K_{n}=\{1\}$ if $n$ is odd. Then $\left(
K_{n}\right)  _{n\geq1}$ is a decreasing sequence of compact subsets of
$\left\{  0,1\right\}  $. If there exists $\xi$ in $%
{\textstyle\bigcap_{n\geq1}}
K_{n}$, then either $\xi>0$ or $\xi<1$. In the first case, $a_{n}=0$ for all
even $n$, and in the second, $a_{n}=0$ for all odd $n$. Thus the
aforementioned classical result implies the essentially nonconstructive
principle \textsf{LLPO: \ }If $\left(  a_{n}\right)  _{n\geq1}$ is a binary
sequence with at most one term equal to $1$, then either all even-indexed
terms $a_{n}$ equal $0$, or else all odd-indexed terms equal $0$.}

\item[\emph{(b)}] such that $\mathsf{diam}(K_{n})\rightarrow0$ as
$n\rightarrow\infty$.
\end{itemize}

%

\noindent
For each $n$ let $x_{n}\in K_{n}$. Then $\left(  x_{n}\right)  _{n\geqslant1}$
converges to the unique point $\xi$ in $%
{\textstyle\bigcap_{n\geqslant1}}
K_{n}$.
\end{proposition}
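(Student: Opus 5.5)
The plan is to show that $(x_{n})_{n\geqslant1}$ is a Cauchy sequence lying eventually in each $K_{N}$. Then I would use completeness of the compact set $K_{1}$ to get a limit $\xi$, and closedness of each $K_{N}$ to place $\xi$ in the intersection. Uniqueness will come directly from hypothesis (b). Everything here is constructive: no sequential compactness is invoked, only the existence of limits of Cauchy sequences in complete sets.

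\emph{Cauchy property.} Given $\varepsilon>0$, use (b) to choose $N$ such that $\mathsf{diam}(K_{N})<\varepsilon$; the diameter exists because $K_{N}$ is compact, as noted just before the proposition. For $m,n\geqslant N$, hypothesis (a) gives $x_{m}\in K_{m}\subset K_{N}$ and $x_{n}\in K_{n}\subset K_{N}$. Hence $\rho(x_{m},x_{n})\leqslant\mathsf{diam}(K_{N})<\varepsilon$. So $(x_{n})$ is a Cauchy sequence in $K_{1}$.

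\emph{Limit and membership.} Since $K_{1}$ is compact, it is complete, so $\xi=\lim_{n\rightarrow\infty}x_{n}$ exists in $K_{1}$. Fix $N$. The tail $(x_{n})_{n\geqslant N}$ lies in $K_{N}$, and $K_{N}$ is complete, so this tail has a limit in $K_{N}$. Limits in $X$ are unique, so that limit is $\xi$, and therefore $\xi\in K_{N}$. As $N$ was arbitrary, $\xi\in\bigcap_{n\geqslant1}K_{n}$.

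\emph{Uniqueness.} Suppose $\eta$ also lies in $\bigcap_{n\geqslant1}K_{n}$. Then for every $n$ we have $\rho(\xi,\eta)\leqslant\mathsf{diam}(K_{n})$, and this tends to $0$ by (b). Hence $\rho(\xi,\eta)=0$, that is, $\eta=\xi$.

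The only step needing any care is the constructive check that $\xi\in K_{N}$. I do it by appealing to the completeness of each $K_{N}$ together with uniqueness of limits, rather than by any argument about subsequences, which would require the nonconstructive sequential compactness mentioned in the footnote. Beyond that, the proof is routine.
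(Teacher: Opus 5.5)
Your proposal is correct and follows the paper's proof essentially step for step. Like the paper, you get the Cauchy property from the diameter bound, take the limit in the complete set $K_{1}$, and place $\xi$ in each $K_{N}$ using completeness of $K_{N}$ applied to the tail. Uniqueness comes from (b), where you spell out the argument $\rho(\xi,\eta)\leqslant\mathsf{diam}(K_{n})\rightarrow0$ that the paper leaves implicit.
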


\begin{proof}
Given $\varepsilon>0$, choose $\nu$ such that $\mathsf{diam}(K_{\nu
})<\varepsilon$. By (a), if $m>n\geqslant\nu$, then $x_{m},x_{n}\in K_{N}$, so
$\rho(x_{m},x_{n})<\varepsilon$. Hence $\left(  x_{n}\right)  _{n\geqslant
1}\,\ $is a Cauchy sequence, and therefore converges to a limit $\xi$, in the
complete set $K_{1}$. Since for each $N$ the Cauchy sequence $\left(
x_{n}\right)  _{n\geqslant N}$ belongs to the complete set $K_{N}$, we see
that $\xi\in K_{N}$. Hence $\xi\in%
{\textstyle\bigcap_{n\geqslant1}}
K_{n}$. The uniqueness of $\xi$ follows from (b).%
\hfill

\end{proof}

%

\medskip

This brings us to our constructive EVP\textbf{.}

\begin{theorem}
\label{Ekeland}Let $f$ be a continuous real-valued mapping on a locally
compact metric space $X$, such that $\inf f$ exists. Let $x_{0}\in X$, $c>0,$
and $\varepsilon>0$. Then there exists $v\in X$ such that%
\begin{equation}
f(v)+c\rho\left(  v,x_{0}\right)  \leqslant f\left(  x_{0}\right)  \label{1}%
\end{equation}
and%
\begin{equation}
f\left(  v\right)  <f(x)+c\rho\left(  v,x\right)  +\varepsilon. \label{2}%
\end{equation}
for all $x\neq v$ in$\ X$.
\end{theorem}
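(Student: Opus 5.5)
We plan to mimic the classical Ekeland iteration, but we take infima only over compact lower sections supplied by Corollary \ref{may25c3}, and we absorb the unavoidable approximation errors into $\varepsilon$. Set $x_{0}$ as the starting point and fix a sequence of tolerances $\varepsilon_{n}>0$ with $\sum_{n}\varepsilon_{n}<\varepsilon$. Having constructed $x_{n}$, consider $g_{n}(x)=f(x)+c\rho(x,x_{n})$. By Corollary \ref{may25c3}, $\inf g_{n}$ exists, and $\mathsf{L}(g_{n},\lambda)$ is compact for all but countably many $\lambda>\inf g_{n}$. Note that $\inf g_{n}\leqslant g_{n}(x_{n})=f(x_{n})$.

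Choose $\lambda_{n}$ with $\inf g_{n}<\lambda_{n}<\min\{f(x_{n}),\inf g_{n}+\varepsilon_{n}\}$ so that $\mathsf{L}(g_{n},\lambda_{n})$ is compact. Strictly speaking, we should work inside $S_{n}$, the set of points $x$ with $f(x)+c\rho(x,x_{n})\leqslant f(x_{n})$. A cleaner route is to run the whole construction inside $K_{1}=\mathsf{L}(g_{0},\lambda)$ for a compact choice $\lambda$ slightly above $\inf g_{0}$, as in Corollary \ref{may25c3}. We then iterate: $K_{n+1}=\{x\in K_{n}: f(x)+c\rho(x,x_{n})\leqslant\lambda_{n}\}$, where $x_{n}\in K_{n}$ nearly minimises $f$ on $K_{n}$ and $\lambda_{n}$ is a noncountably-excluded level making $K_{n+1}$ compact. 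Here Proposition \ref{Bisha} is applied to the continuous function $f+c\rho(\cdot,x_{n})$ on the compact set $K_{n}$, with $\lambda_{n}$ chosen within $\varepsilon_{n}$ of its infimum and above $f(x_{n})$ so that $x_{n}\in K_{n+1}$.

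The triangle inequality gives $K_{n+1}\subset K_{n}$. It also transfers the defining inequality from stage $n$ to stage $0$: if $x\in K_{n+1}$, then $f(x)+c\rho(x,x_{0})\leqslant f(x_{n})+c\rho(x_{n},x_{0})\leqslant\dots\leqslant f(x_{0})$ plus controlled errors. The diameter estimate is the standard one. For $x\in K_{n+1}$ we have $c\rho(x,x_{n})\leqslant\lambda_{n}-f(x)\leqslant\lambda_{n}-\inf_{K_{n}}f$-type bounds, which are at most about $\varepsilon_{n}$ once $x_{n}$ is chosen with $f(x_{n})$ within $\varepsilon_{n}$ of $\inf\{f+c\rho(\cdot,x_{n-1})\}$ on $K_{n}$. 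Hence $\mathsf{diam}(K_{n+1})\leqslant2\varepsilon_{n}/c\to0$. Proposition \ref{may04p1} then yields $v=\lim x_{n}$, which is the unique point of $\bigcap K_{n}$.

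Inequality (\ref{1}) follows from $v\in K_{1}$, provided we arrange $K_{1}\subset\mathsf{L}(g_{0},f(x_{0}))$. If $\inf g_{0}<f(x_{0})$ this is automatic. Since we cannot decide that, we instead allow $\lambda\leqslant f(x_{0})$ by choosing levels inside $(\inf g_{0},f(x_{0})]$, handling the degenerate case by approximating: either $x_{0}$ itself works up to $\varepsilon$, or there is room. For (\ref{2}), let $x\neq v$. Because $v$ is the unique point of the intersection and the diameters shrink, there is an $n$ with $x\notin K_{n+1}$ in the constructive sense $\rho(x,K_{n+1})>0$ (the sets are compact, hence located). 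If $x\in K_{n}$, this means $f(x)+c\rho(x,x_{n})>\lambda_{n}$. On the other hand, $f(v)+c\rho(v,x_{n})\leqslant\lambda_{n}$, so $f(v)<f(x)+c\rho(x,x_{n})-c\rho(v,x_{n})+\varepsilon_{n}\leqslant f(x)+c\rho(x,v)+\varepsilon$. If $x$ lies outside $K_{n}$ at an earlier stage, we recurse to the first stage where it drops out and sum the errors $\sum\varepsilon_{k}<\varepsilon$.

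The main obstacle is exactly this last step. Constructively we cannot decide membership in the $K_{n}$, so from $x\neq v$ we must produce a stage at which $x$ positively violates the defining inequality. The difficulty is that $x\in K_{n}\setminus K_{n+1}$ is not a decidable dichotomy. I would use approximate versions: for each $n$, either $g_{n}(x)>\lambda_{n}-\varepsilon_{n}$ or $g_{n}(x)<\lambda_{n}$. The first time the first alternative holds gives the inequality with an extra $\varepsilon_{n}$ slack. If the second alternative held up to an $n$ with $\mathsf{diam}$ very small relative to $\rho(x,v)$, we would get a contradiction with the positive distance. The $\varepsilon$ in (\ref{2}) exists to pay for precisely this slack.
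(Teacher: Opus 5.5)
Once the false start is removed, your argument is correct, but it is organised differently from the paper's. The paper uses absolute lower sections $K_n=\mathsf{L}(g_n,f(x_n)-\varepsilon_n)$ at levels strictly below $g_n(x_n)=f(x_n)$. These can be empty, so at every stage the paper must use the empty-or-compact alternative of Corollary~\ref{may25c3}, keep a binary flag sequence recording termination, and prove nesting by the triangle inequality. It then obtains (\ref{2}) by refuting $f(x)+c\rho(x,v)<f(v)-\varepsilon/2$, which would force $x$ into $K_n$ for all large $n$.

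You instead take relative sections $K_{n+1}=\{y\in K_n:g_n(y)\leqslant\lambda_n\}$ with, for $n\geqslant1$, non-exceptional levels $\lambda_n\in(f(x_n),f(x_n)+\varepsilon_n)$; these are compact by Proposition~\ref{Bisha} applied to the compact space $K_n$. Since $x_n\in K_{n+1}$ and nesting is built in, the only undecidable point is stage $0$. There, either $\inf g_0>f(x_0)-\varepsilon$, and $v=x_0$ works, or $\inf g_0<f(x_0)$, and a non-exceptional $\lambda_0$ can be taken between them; this gives $K_1\subset\mathsf{L}(g_0,f(x_0))$ and hence (\ref{1}). Let $x_n$ nearly minimise $f$ itself on $K_n$, as in your construction paragraph; the variant in your diameter paragraph, nearly minimising $f+c\rho(\cdot,x_{n-1})$, does not force the diameters to shrink. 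With that choice every $y\in K_{n+1}$ satisfies $c\rho(y,x_n)<2\varepsilon_n$.

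Your proof of (\ref{2}) is then direct. Take $N$ with $\mathsf{diam}(K_N)<\rho(x,v)$ and apply the approximate splitting $g_k(x)>\lambda_k-\varepsilon_k$ or $g_k(x)<\lambda_k$ for each $k<N$. The second alternative for every $k<N$ would put $x$ in $K_N$, which is impossible. The first alternative at any single $k$ already gives $f(v)<f(x)+c\rho(x,v)+\varepsilon_k$, because $g_k(v)\leqslant\lambda_k$; membership $x\in K_k$ is not needed. So you need no summation of errors (only $\varepsilon_k<\varepsilon$ and $\varepsilon_k\to0$) and no transfer of inequalities back to stage $0$. Your opening recipe $\inf g_n<\lambda_n<f(x_n)$ should simply be deleted, since it presupposes the very inequality you later recognise as undecidable.

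Your route buys a lighter construction, with a single dichotomy and no flag sequence. The paper's route stays closer to the classical proof, using absolute sections of $f+c\rho(\cdot,x_n)$ below $f(x_n)$, at the price of that bookkeeping.
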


\begin{proof}
Define the mapping $g_{0}:X\rightarrow\mathbb{R}$ by $g_{0}(x)\equiv
f(x)+c\rho(x,x_{0})$. Then $\inf g_{0}$ exists, by Corollary \ref{may25c3}. We
may assume that $\inf f=0$ and therefore $\inf g_{0}$ is positive. By
Corollary \ref{may25c3}, there exists $\varepsilon_{0}\ $with $0<\varepsilon
_{0}<\frac{1}{2}\min\left\{  \varepsilon,f(x_{0})\right\}  $ and
$f(x_{0})-\varepsilon_{0}\neq\inf g_{0}$, such that if $f(x_{0})-\varepsilon
_{0}>\inf g_{0}$, then $\mathsf{L}(g_{0},f(x_{0})-\varepsilon_{0})$ is
compact. If $f(x_{0})-\varepsilon_{0}<\inf g_{0}$, then $\mathsf{L}%
(g_{0},f(x_{0})-\varepsilon_{0})$ is empty, in which case for all $x\in X$,
\[
f(x_{0})\leqslant f(x)+c\rho(x,x_{0})+\varepsilon_{0}<f(x)+c\rho
(x,x_{0})+\varepsilon
\]
and $x_{0}$ itself satisfies the desired conditions for $v$. Thus we may
assume that $\mathsf{L}(g_{0},f(x_{0})-\varepsilon_{0})$\ is compact. Setting
$\lambda_{0}=0$ and $K_{0}=\mathsf{L}(g_{0},f(x_{0})-\varepsilon_{0})$, we
construct an increasing binary sequence $\left(  \lambda_{n}\right)
_{n\geqslant0}$; a sequence $\left(  g_{n}\right)  _{n\geqslant0} $ of
continuous real-valued functions on $X$; a sequence $\left(  K_{n}\right)
_{n\geqslant0}$ of compact subsets of $X$; a sequence $\left(  \varepsilon
_{n}\right)  _{n\geqslant0}$ of positive numbers; and a sequence $\left(
x_{n}\right)  _{n\geqslant0}$ of points of $X$, such that the following hold
for each applicable $n$.

\begin{itemize}
\item[(a)] $0<\varepsilon_{n}<\frac{1}{2}\min\left\{  \varepsilon
,f(x_{n})\right\}  $.

\item[(b)] If $\lambda_{n}=0$, then $g_{n}(x)=f(x)+c\rho(x,x_{n})$,
$f(x_{n})-\varepsilon_{n}>\inf g_{n}$,$\ K_{n}\equiv\mathsf{L}(g_{n}%
,f(x_{n})-\varepsilon_{n})$ is compact, $x_{n+1}\in K_{n}\,$, and
$f(x_{n+1})<\inf f(K_{n})+2^{-n-1}$.

\item[(c)] If $\lambda_{n}=1-\lambda_{n-1}$, then $g_{n}(x)=f(x)+c\rho
(x,x_{n})$, $f(x_{n})-\varepsilon_{n}<\inf g_{n}$, and for all $m\geqslant n $
we have $\lambda_{m}=1$, $g_{m}=g_{n}$, $K_{m}=\left\{  x_{n}\right\}  $, and
$x_{m}=x_{n}$.

\item[(d)] If $n\geqslant1$, then $K_{n}\subset K_{n-1}$.
\end{itemize}

%

\noindent
Suppose that for some $n\geqslant0$ we have constructed $\lambda_{n-1}%
,g_{n-1},K_{n-1},\varepsilon_{n-1},$ and $x_{n-1}$ with the applicable
properties. If $\lambda_{n-1}=0$, choose $x_{n}\in K_{n-1}$ such that
$f(x_{n})<\inf f(K_{n-1})+2^{-n}$. By Corollary \ref{may25c3}, $\inf g_{n}$
exists, as does $\varepsilon_{n}$ with $0<\varepsilon_{n}<\frac{1}{2}%
\min\left\{  \varepsilon,f(x_{n})\right\}  $ and $f(x_{n})-\varepsilon_{n}%
\neq\inf g_{n}$, such that if $f(x_{n})-\varepsilon_{n}>\inf g_{n}$, then
$\mathsf{L}(g_{n},f(x_{n})-\varepsilon_{n})\ $is compact. Either
$f(x_{n})-\varepsilon_{n}<\inf g_{n}$, in which case $\mathsf{L}(g_{n}%
,f(x_{n})-\varepsilon_{n})$ is empty and we set $\lambda_{n}=1$ and
$K_{n}=\left\{  x_{n}\right\}  $; or else $f(x_{n})-\varepsilon_{n}>\inf
g_{n}$, in which case we set $\lambda_{n}=0$ and $K_{n}=\mathsf{L}%
(g_{n},f(x_{n})-\varepsilon_{n})$. This disposes of the alternative
$\lambda_{n-1}=0$. If, on the other hand, $\lambda_{n-1}=1$, we set
$\lambda_{n}=1$, $g_{n}=g_{n-1}$, $K_{n}=\left\{  x_{n-1}\right\}  $,
$\varepsilon_{n}=\varepsilon_{n-1}$, and $x_{n}=x_{n-1}$. This completes the
inductive construction itself. Clearly, (a) and (b) are satisfied. It is
straightforward to prove (c), so it remains to prove (d). To that end, if
$\lambda_{n}=0$, then for each $x\in K_{n}$ we have
\begin{align*}
f\left(  x\right)  +c\rho\left(  x,x_{n-1}\right)   &  \leqslant f\left(
x\right)  +c\rho\left(  x,x_{n}\right)  +c\rho\left(  x_{n},x_{n-1}\right) \\
&  \leqslant f\left(  x_{n}\right)  -\varepsilon_{n}+c\rho\left(
x_{n},x_{n-1}\right) \\
&  \leqslant f\left(  x_{n-1}\right)  -\varepsilon_{n}-\varepsilon
_{n-1}\ \ \text{(as }x_{n}\in K_{n-1}\text{)}\\
&  <f\left(  x_{n-1}\right)  -\varepsilon_{n-1}%
\end{align*}
and therefore $x\in K_{n-1}$; whence $K_{n}\subset K_{n-1}$. If $\lambda
_{n}=1-\lambda_{n-1}$, then $K_{n}=\left\{  x_{n}\right\}  \subset K_{n-1}$,
by (c)~and (b); and if $\lambda_{n-1}=1$, then $K_{n}=K_{n-1}$, by (c). This
completes the proof of (d).

If $n\geqslant1$ and $\lambda_{n}=0$, then for each $x\in K_{n}$,%
\[
\inf f(K_{n})+c\rho\left(  x,x_{n}\right)  \leqslant f\left(  x\right)
+c\rho\left(  x,x_{n}\right)  \leqslant f\left(  x_{n}\right)  -\varepsilon
_{n},
\]
so, noting that $\inf f(K_{n})\geqslant\inf f(K_{n-1})$, we have
\[
\rho\left(  x,x_{n}\right)  \leqslant c^{-1}(f\left(  x_{n}\right)  -\inf
f(K_{n})-\varepsilon_{n})<c^{-1}(f\left(  x_{n}\right)  -\inf f(K_{n-1}%
))<\frac{1}{2^{n}c}.
\]
It follows from this that $\mathsf{diam}(K_{n})<1/2^{n-1}c$ for all $n$. Hence
$\left(  K_{n}\right)  _{n\geqslant0}$ is a descending sequence of compact
sets whose diameters tend to $0$. By Proposition \ref{may04p1}, $\left(
x_{n}\right)  _{n\geqslant1}$ converges to the unique point$\ v~$of $%
{\textstyle\bigcap_{n\geqslant0}}
K_{n}$. Since $v\in K_{0}$, statement (\ref{1}) clearly holds. On the other
hand, given $x\in X$ with $x\neq v$, we have either (\ref{2}) or%
\begin{equation}
f\left(  x\right)  +c\rho\left(  x,v\right)  <f\left(  v\right)
-\frac{\varepsilon}{2}. \label{06a}%
\end{equation}
To complete the proof, it will suffice to rule out the latter alternative.
Assuming (\ref{06a}), if $\lambda_{n}=1-\lambda_{n-1}$ we have $\mathsf{L}%
(g_{n},f(x_{n})-\varepsilon_{n})=\varnothing$ and $x_{m}=x_{n}$ for all
$m\geqslant n$. Hence $v=x_{n}$ and therefore%
\[
f\left(  x\right)  +c\rho\left(  x,v\right)  =f\left(  x\right)  +c\rho\left(
x,x_{n}\right)  \geqslant f\left(  x_{n}\right)  -\varepsilon_{n}\geqslant
f\left(  v\right)  -\frac{\varepsilon}{2},
\]
contradicting (\ref{06a}). Thus, recalling that $\lambda_{0}=0$, for all $n$
we must have $\lambda_{n}=0$ and $K_{n}$ equal to the compact set$\ \mathsf{L}%
(g_{n},f(x_{n})-\varepsilon_{n})$. By (\ref{06a}) and the continuity of $f $
and $\rho$, for all sufficiently large $n$ we have%
\[
f(x)+c\rho(x,x_{n})+\varepsilon_{n}<f(x)+c\rho(x,x_{n})+\frac{\varepsilon}%
{2}<f(x_{n});
\]
whence $x\in K_{n}$ and therefore%
\[
\rho(x_{n},x)\leqslant\mathsf{diam}(K_{n})<\frac{1}{2^{n-1}c}\rightarrow
0\text{ as }n\rightarrow\infty\,.
\]
Thus $x=\lim_{n\rightarrow\infty}x_{n+1}=v$, a contradiction which ensures
that the alternative (\ref{06a}) is ruled out.
\end{proof}

%

\medskip

We now have a constructive version of a well-known classical consequence of
Theorem \ref{Ekeland} that provides us with approximate minima.

\begin{corollary}
\label{Ekecor}Let $f$ be a uniformly continuous mapping on a locally compact
metric space $X$ such that $\inf f$ exists. Let $\varepsilon>0$ and let
$x_{0}$ be a point of $X$ such that $f(x_{0})<\inf_{x\in X}f+\varepsilon$.
Then for each $\lambda>0$ there exists $v\in X$ such that $f(v)\leqslant
f(x_{0}),\ \rho\left(  v,x_{0}\right)  <\sqrt{\varepsilon}$, and%
\begin{equation}
f\left(  v\right)  <f(x)+\sqrt{\varepsilon}\rho\left(  v,x\right)
+\varepsilon\ \label{33}%
\end{equation}
whenever $x\in X$ and $x\neq v$.
\end{corollary}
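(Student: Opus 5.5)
We apply Theorem \ref{Ekeland} with $c=\sqrt{\varepsilon}$, with the given $x_{0}$, and with the tolerance $\varepsilon$ itself. The parameter $\lambda$ plays no role in the argument, so we simply carry it along. Since $f$ is uniformly continuous on $X$, it is continuous in the sense of the paper, and $\inf f$ exists by hypothesis. The theorem therefore yields $v\in X$ satisfying two conditions. The first is
\[
f(v)+\sqrt{\varepsilon}\rho(v,x_{0})\leqslant f(x_{0}).
\]
The second is $f(v)<f(x)+\sqrt{\varepsilon}\rho(v,x)+\varepsilon$ for all $x\neq v$, which is exactly (\ref{33}). Moreover $f(v)\leqslant f(x_{0})$ follows at once from the first condition, because $\sqrt{\varepsilon}\rho(v,x_{0})\geqslant0$.

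It remains to obtain $\rho(v,x_{0})<\sqrt{\varepsilon}$. From the first condition and $\inf f\leqslant f(v)$ we get
\[
\sqrt{\varepsilon}\rho(v,x_{0})\leqslant f(x_{0})-f(v)\leqslant f(x_{0})-\inf f<\varepsilon.
\]
Dividing by $\sqrt{\varepsilon}$ gives $\rho(v,x_{0})<\sqrt{\varepsilon}$. This is the key step, and it uses the strict inequality $f(x_{0})<\inf f+\varepsilon$ in an essential way. Because that inequality is strict, the final bound is strict as well, with no need to approximate.

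The only possible obstacle is checking that the hypotheses of Theorem \ref{Ekeland} are actually met. These are local compactness of $X$, continuity of $f$ (implied by uniform continuity), existence of $\inf f$, and $c>0$; all of them hold. So the corollary is a direct specialisation of the theorem.
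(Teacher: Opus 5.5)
Your proposal is correct and is essentially the paper's own proof. Like the paper, you apply Theorem \ref{Ekeland} with $c=\sqrt{\varepsilon}$. You then chain (\ref{1}) with $f(x_{0})<\inf f+\varepsilon\leqslant f(v)+\varepsilon$ to obtain both $f(v)\leqslant f(x_{0})$ and $\rho(v,x_{0})<\sqrt{\varepsilon}$.
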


\begin{proof}
Taking $\alpha=\sqrt{\varepsilon}$ in Theorem \ref{Ekeland}, construct $v\in
X$ such that (\ref{1}) and (\ref{33}) hold. Then%
\[
f(v)+\sqrt{\varepsilon}\rho\left(  v,x_{0}\right)  \leqslant f\left(
x_{0}\right)  <\inf f+\varepsilon\leqslant f(v)+\varepsilon,
\]
so $f(v)\leqslant f(x_{0})$ and $\rho(v,x_{0})<\sqrt{\varepsilon}$.
\end{proof}

%

\bigskip
%

\noindent
\textbf{Author's address: \ }Department of Mathematics \& Statistics,
University of Canterbury, Christchurch 8140, New Zealand%

\noindent
\textbf{Author's email: \ \ \ \ \ }\texttt{dugbridges@gmail.com}

\end{document}